\numberwithin{equation}{section}
 \newcommand{\rr}{{\mathbb R}}
 \newcommand{\R}{{\mathbb R}}
 \newcommand\loc{{\mathop\mathrm{\,loc\,}}}
 \newcommand\ep{\epsilon}
\def\vint{\mathop{\mathchoice%
		{\setbox0\hbox{$\displaystyle\intop$}\kern 0.22\wd0%
			\vcenter{\hrule width 0.6\wd0}\kern -0.82\wd0}%
		{\setbox0\hbox{$\textstyle\intop$}\kern 0.2\wd0%
			\vcenter{\hrule width 0.6\wd0}\kern -0.8\wd0}%
		{\setbox0\hbox{$\scriptstyle\intop$}\kern 0.2\wd0%
			\vcenter{\hrule width 0.6\wd0}\kern -0.8\wd0}%
		{\setbox0\hbox{$\scriptscriptstyle\intop$}\kern 0.2\wd0%
			\vcenter{\hrule width 0.6\wd0}\kern -0.8\wd0}}%
	\mathopen{}\int}
\numberwithin{equation}{section}
\newtheorem{theorem}{Theorem}[section]
\newtheorem{proposition}[theorem]{Proposition}
\theoremstyle{definition}
\def\XXint#1#2#3{{\setbox0=\hbox{$#1{#2#3}{\int}$}
		\vcenter{\hbox{$#2#3$}}\kern-.5\wd0}}
\begin{document}

\title[Sharp Morrey regularity for an even order elliptic system]{Sharp Morrey regularity for an even order elliptic system}

\author[C.-Y. Guo and W.-J. Qi]{Chang-Yu Guo and Wen-Juan Qi}

\address[Chang-Yu Guo]{Research Center for Mathematics and Interdisciplinary Sciences, Shandong University 266237,  Qingdao, P. R. China and Frontiers Science Center for Nonlinear Expectations, Ministry of Education, P. R. China}
\email{changyu.guo@sdu.edu.cn}

\address[Wen-Juan Qi]{Research Center for Mathematics and Interdisciplinary Sciences, Shandong University 266237,  Qingdao, P. R. China and Frontiers Science Center for Nonlinear Expectations, Ministry of Education, P. R. China}
\email{wenjuan.qi@mail.sdu.edu.cn}

\thanks{Corresponding author: Wen-Juan Qi.}

\thanks{Both authors are supported by the Young Scientist Program of the Ministry of Science and Technology of China (No. 2021YFA1002200), the National Natural Science Foundation of China (No. 12101362) and the Taishan Scholar Program and the Natural Science Foundation of Shandong Province (No. ZR2022YQ01, ZR2021QA003). }

\begin{abstract}
In this short note, we establish a sharp Morrey regularity theory for an even order elliptic system of Rivi\`ere type:
\begin{equation*}
	\Delta^{m}u=\sum_{l=0}^{m-1}\Delta^{l}\left\langle V_{l},du\right\rangle +\sum_{l=0}^{m-2}\Delta^{l}\delta\left(w_{l}du\right)+f\qquad \text{ in } B^{2m}\label{eq: Longue-Gastel system}
\end{equation*}
under minimal regularity assumptions on the coefficients functions $V_l, w_l$ and that $f$ belongs to certain Morrey space. This can be regarded as a further extension of the recent $L^p$-regularity theory obtained by Guo-Xiang-Zheng \cite{GXZ2022}, and generalizes \cite{Du-Kang-Wang-2022,XZ2023} for second and fourth order elliptic systems.  
\end{abstract}

\maketitle

{\small
\keywords {\noindent {\bf Keywords:} Even order elliptic system; Regularity theory; Morrey space; Decay estimates; Riesz potential theory}
\smallskip
\newline
\subjclass{\noindent {\bf 2020 Mathematics Subject Classification: 35J47; 35B65}   }
}
\bigskip

\arraycolsep=1pt

\section{Introduction and main results}

In this paper, we consider regularity of weak solutions $u\in W^{m,2}(B^{2m},\rr^n)$ to the following even order linear elliptic system of Rivi\`ere type
\begin{equation}\label{eq:even  order linear elliptic system}
	\Delta^mu=\sum_{l=0}^{m-1}\Delta^l\langle V_l,du\rangle+\sum_{l=0}^{m-2}\Delta^l\delta(w_ldu)+f\qquad\text{in\,\,}B^{2m},
\end{equation}
where $B^{2m}=B^{2m}_1(0)\subset\rr^{2m}$ is the unit ball, $f\in M^{p,p\lambda}(B^{2m},\R^n)$ with $p\ge 1$ and the coefficient functions satisfy 
\begin{equation}
	\begin{split}
		&w_l\in W^{2l+2-m,2}(B^{2m},\rr^{n\times n})\qquad\text{for\,\,}l\in\{0,...,m-2\}\\
		&V_l\in W^{2l+1-m,2}(B^{2m},\rr^{n\times n}\otimes\wedge^1\rr^{2m})\qquad\text{for\,\,}l\in\{0,...,m-1\}.
	\end{split}
\end{equation}
Moreover, the first order potential $V_0$ has the decomposition $V_0=d\eta+F$ with
\begin{equation}
	\eta\in W^{2-m,2}(B^{2m},so(n)),\quad F\in W^{2-m,\frac{2m}{m+1},1}(B^{2m},\rr^{n\times n}\otimes\wedge^1\rr^{2m}).
\end{equation}
Here $so(n)$ represents the space of $n\times n$ antisymmetric matries. 

The homogeneous version of \eqref{eq:even  order linear elliptic system}, that is when $f\equiv0$, was first introduced by Rivi\`ere \cite{R2007} for the case $m=1$, and later by Lamm-Rivi\`ere \cite{Lamm-Riviere-2008} for the case $m=2$, and finally by de Longueville and Gastel \cite{DG2021} for the general case. It includes many interesting geometric models, such as the harmonic mapping equations ($m=1$), the prescribed mean curvature equations ($m=1$), the biharmonic mapping equations ($m=2$), the $m$-polyharmonic mapping equations and so on; we refer the interested readers to \cite{R2007,Riviere-Struve-2008,Chang-W-Y-1999,Struwe-2008,Wang-2004-CPAM,Goldstein-Strzelecki-Zatorska-2009,GS2009,Lamm-Wang-2009,Moser-2015-TAMS,AY17,DG2021,Horter-Lamm-2021,Guo-Xiang-2021,GXZ2021,GXZ2022,Chen-Zhu-2023,HJL2023} and the references therein for various aspects regarding this system or polyharmonic mappings. 

To explain the difficulty toward regularity issues, let us look at the simplest case when $m=1$. In this case, system \eqref{eq:even  order linear elliptic system} with $f\equiv 0$ reduces to a second order elliptic PDE
\begin{equation}\label{eq:2 dim riviere system}
	-\Delta u=\Omega\cdot\nabla u\qquad\text{in\,\,}B^2,
\end{equation}
which was initially introduced by Rivi\`ere in his celebrated work \cite{R2007}. The right-hand side of \eqref{eq:2 dim riviere system} lies merely in $L^1$ and thus prevents a direct application of the standard iteration techniques from regularity theory of elliptic equations. A fundamental observation, due to Rivi\`ere \cite{R2007}, was the algebraic anti-symmetry of $\Omega$, allows people to find a conservation law of \eqref{eq:2 dim riviere system}, turning \eqref{eq:2 dim riviere system} into a divergence form. Starting from the conservation law, continuity and compactness follow routinely via standard analytic tools.

Rivi\`ere's conservation law approach was soon extended to higher order systems in \cite{Lamm-Riviere-2008} ($m=2$) and \cite{DG2021} ($m\geq 3$). It should be noticed, however, that direct extension of this approach only gives continuity of weak solutions. A refined approach for H\"older regularity of \eqref{eq:even  order linear elliptic system} (for $f\equiv 0$) can be found in \cite{Guo-Xiang-2019-Boundary,Guo-Xiang-2021}. 

Geometric applications, such as the heat flow or bubbling analysis of polyharmonic mappings, motivate people to establish a deeper $L^p$-regularity of weak solutions to \eqref{eq:even  order linear elliptic system}; see \cite{ST2013,Moser-2015-TAMS,AY17,Chen-Zhu-2023,GXZ2022} for more on the motivation/applications. The case $m=1$ was studied by Sharp-Topping \cite{ST2013}, the case $m=2$ was considered by Guo-Xiang-Zheng \cite{GXZ2021} (see also \cite{Guo-Wang-Xiang-2022-CV} for the supercritical case), and the general case $m\geq 3$ was established very recently by Guo-Xiang-Zheng \cite{GXZ2022}. The starting point of \cite{GXZ2022} is the following conservation law of  de Longueville and Gastel \cite{DG2021} (see also \cite{Horter-Lamm-2021} for another version of conservation law). To describe their convservation law, for $D\subset \R^{2m}$, we set
\begin{equation}\label{eq:theta for small coefficient}
	\begin{aligned}
		\theta_{D}:=\sum_{k=0}^{m-2}&\|w_k\|_{W^{2k+2-m,2}(D)}+\sum_{k=1}^{m-1}\|V_k\|_{W^{2k+1-m,2}(D)}\\
		&+\|\eta\|_{W^{2-m,2}(D)}+\|F\|_{W^{2-m,\frac{2m}{m+1},1}(D)}
	\end{aligned}
\end{equation}
Under a smallness assumption
\begin{equation}\label{eq:smallness assumption}
	\theta_{B^{2m}_1}<\ep_m,
\end{equation}
they successfully found 
$$A\in W^{m,2}\cap L^\infty(B^{2m},Gl(n)) \text{ and } B\in W^{2-m,2}(B^{2m},\R^{n\times n}\otimes \wedge^2\R^{2m})$$  which satisfies
\begin{equation*}
	\Delta^{m-1}dA+\sum_{k=0}^{m-1}(\Delta^k A)V_k-\sum_{k=0}^{m-2}(\Delta^k dA)w_k=\delta B,
\end{equation*}
such that $u\in W^{m,2}(B^m,\R^n)$ solves \eqref{eq:even  order linear elliptic system} in $B^{2m}$\footnote{In the orginal paper \cite{DG2021}, the authors only obtained this on $B^{2m}_{1/2}$. But a minor change of arguments lead to the current form, for details, see \cite{GXZ2023}} if and only if it satisfies the following conservation law (namely, in divergence form): 
\newline 
\begin{equation}\label{eq:conservation law 1}
	\begin{aligned}
		0&=\delta\Big[\sum_{l=0}^{m-1}\left(\Delta^{l} A\right) \Delta^{m-l-1} d u-\sum_{l=0}^{m-2}\left(d \Delta^{l} A\right) \Delta^{m-l-1} u \\ &\qquad -\sum_{k=0}^{m-1} \sum_{l=0}^{k-1}\left(\Delta^{l} A\right) \Delta^{k-l-1} d\left\langle V_{k}, d u\right\rangle+\sum_{k=0}^{m-1} \sum_{l=0}^{k-1}\left(d \Delta^{l} A\right) \Delta^{k-l-1}\left\langle V_{k}, d u\right\rangle \\ &\qquad -\sum_{k=0}^{m-2} \sum_{l=0}^{k-2}\left(\Delta^{l} A\right) d \Delta^{k-l-1} \delta\left(w_{k} d u\right)+\sum_{k=0}^{m-2} \sum_{l=0}^{k-2}\left(d \Delta^{l} A\right) \Delta^{k-l-1} \delta\left(w_{k} d u\right) \\ &\qquad -\langle B, d u\rangle\Big]+Af,
	\end{aligned}
\end{equation}
where $d \Delta^{-1} \delta$  denotes the identity map. The general idea of \cite{GXZ2022} is similar to Sharp-Topping \cite{ST2013} but with principal technical differences. One crucial step is to establish a suitable Morrey decay for all the gradients of weak solutions when $f\in L^p$ for $p>1$. Since $L^p\subset M^{1,\lambda}$ for some $\lambda\in (0,1)$, it is natrual to weaken the assumption $f\in L^p$ to $f\in M^{1,\lambda}$. Our first main result gives the optimal Morrey decay when $f\in M^{1,\lambda}$.  

\begin{theorem}\label{thm:holder continuity}
	Let $u\in W^{m,2}(B^{2m},\rr^n)$ be a weak solution of \eqref{eq:even  order linear elliptic system} with $f\in M^{1,\lambda}(B^{2m},\R^n)$ for some $0<\lambda<1$. Then 
	$$\nabla^iu\in M^{\frac{2m}{i},\frac{2m\lambda}{i}}_{\text{loc}}(B^{2m})\qquad1\leq i\leq m,$$ with
	\begin{equation}\label{eq:decay estimate}
		\sum_{i=1}^{m}\|\nabla^iu\|_{M^{2m/i,2m\lambda/i}(B^{2m}_{1/2})}\leq C\left(\|u\|_{W^{m,2}(B^{2m}_1)}+\|f\|_{M^{1,\lambda}(B^{2m}_1)}\right).
	\end{equation}
As a result, we have $u\in C^{0,\lambda}_{\text{loc}}(B^{2m})$ with 
\begin{equation}\label{eq:holder estimate 1}
	\|u\|_{C^{0,\lambda}(B^{2m}_{1/2})}\leq C \left(\|u\|_{W^{m,2}(B^{2m}_1)}+\|f\|_{M^{1,\lambda}(B^{2m}_1)}\right),
\end{equation}
where $C>0$ is a positive constant depending only on $m,n,\lambda$ and the coefficient functions $V_l,w_l$.
\end{theorem}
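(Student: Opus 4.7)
The plan is to follow the scheme of Guo--Xiang--Zheng \cite{GXZ2022}, replacing the $L^p$ tools by their Morrey-space analogues. By a scaling and covering argument I may assume that the smallness condition \eqref{eq:smallness assumption} holds on $B^{2m}_1$, so the de~Longueville--Gastel construction yields $A\in W^{m,2}\cap L^\infty(B^{2m}, Gl(n))$ and $B\in W^{2-m,2}(B^{2m},\R^{n\times n}\otimes \wedge^2\R^{2m})$ for which \eqref{eq:even  order linear elliptic system} is equivalent to the divergence identity
\[
\delta \Phi = -Af,
\]
where $\Phi$ denotes the bracketed expression in \eqref{eq:conservation law 1}. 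This puts the equation in divergence form, at the price of introducing bilinear products of $A,dA,\dots,\Delta^{m-1}A,B$ with the derivatives of $u$ and with the coefficients $V_l,w_l$.

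For fixed $x_0\in B^{2m}_{1/2}$ and small $r>0$, I would compare $u$ on $B_r(x_0)$ with its polyharmonic replacement $v\in u+W^{m,2}_0(B_r)$ solving $\Delta^m v=0$, which enjoys the classical decay $\int_{B_\rho}|\nabla^i v|^{2m/i}\ls (\rho/r)^{2m}\int_{B_r}|\nabla^i v|^{2m/i}$. The remainder $w=u-v$ lies in $W^{m,2}_0(B_r)$, so via the polyharmonic Green's function of $B_r$ each derivative $\nabla^i w$ is expressed as an $I_{2m-i}$ Riesz potential of $A^{-1}\delta\Phi$, hence of $f$ and of the bilinear terms composing $\Phi$. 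Two analytic ingredients are then invoked: first, Adams's inequality
\[
I_\alpha:\ M^{p,\lambda}(\R^{2m})\to M^{q,\lambda}(\R^{2m}),\qquad \frac{1}{q}=\frac{1}{p}-\frac{\alpha}{\lambda},
\]
to turn the Morrey bound on $f\in M^{1,\lambda}$ into Morrey bounds on its Riesz potentials; and second, Sobolev--Morrey product estimates controlling the typical terms $(\Delta^l A)\,\Delta^{m-l-1}du$, $(d\Delta^l A)\,\Delta^{m-l-1}u$, $(\Delta^l A)V_k\,du$, $(d\Delta^l A)w_k\,du$, and $B\,du$ by multiplying a fractional Sobolev coefficient piece against a Morrey factor. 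Combining these yields a Campanato-type inequality
\[
\Psi_i(\rho)\leq C\bigl[(\rho/r)^{2m(1-\lambda)}+\theta_{B_r}\bigr]\max_{1\le j\le m}\Psi_j(r)+C\|f\|_{M^{1,\lambda}(B^{2m}_1)}^{2m/i},
\]
where $\Psi_i(r):=r^{-2m\lambda}\int_{B_r(x_0)}|\nabla^i u|^{2m/i}$. A standard iteration lemma, using $\theta_{B_r}\to 0$ as $r\to 0$ to absorb the coupling, then produces the sharp Morrey decay \eqref{eq:decay estimate}.

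The Hölder bound \eqref{eq:holder estimate 1} follows at once from the Morrey--Campanato embedding $M^{2m,2m\lambda}\hookrightarrow C^{0,\lambda}$ applied to $\nabla u$ on $B^{2m}_{1/2}$. The main obstacle is producing the \emph{sharp} second Morrey exponent $2m\lambda/i$: with $f$ only in $M^{1,\lambda}$ one is exactly at the borderline of integrability, so the $L^p$ bootstrap used in \cite{GXZ2022} is unavailable and every gain of integrability must be extracted through Adams's Riesz potential estimate in Morrey spaces. Simultaneously one must verify that, at each of the $m$ polyharmonic levels, every product of a fractional Sobolev coefficient against a Morrey factor lands again in a Morrey space compatible with Adams's theorem, so that the recursion can be closed. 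Handling this Riesz-potential/product-rule bookkeeping at the borderline exponent is the technical heart of the proof.
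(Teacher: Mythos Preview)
Your outline has the right ingredients---conservation law, Riesz-potential bounds, decay iteration---but there are two genuine gaps that would keep the argument from closing, and the architecture differs from the paper's in a way that matters.

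First, the conservation law \eqref{eq:conservation law 1} is an identity for $Adu$, not for $u$. Your decomposition $u=v+w$ with $\Delta^m v=0$ gives $\Delta^m w=\Delta^m u$, which is the original right-hand side of \eqref{eq:even  order linear elliptic system}, \emph{not} the divergence-form expression. Writing ``$\nabla^i w$ is an $I_{2m-i}$ potential of $A^{-1}\delta\Phi$'' is incoherent: since $\delta\Phi=-Af$, that quantity is just $-f$, and you have lost all the bilinear terms that carry the crucial smallness factor. The paper instead works with $Adu$ directly: after rewriting the conservation law as $\delta\Delta^{m-1}(Adu)=\sum_i\delta^i(\cdots)+\delta K+Af$, it decomposes $Adu$ via the Newton kernel $c\log|\cdot|$ of $\Delta^m$ on $\R^{2m}$ into $du_{11}+du_{12}+d^*u_2+h$, with $h$ polyharmonic. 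The pieces $u_{11},u_2$ absorb the divergence terms and are controlled by $\varepsilon\sum_i\|\nabla^i u\|_{L^{2m/i}}$ using only ordinary $L^p$ Riesz estimates; the piece $u_{12}$ carries $Af$. No Sobolev--Morrey product rules are needed for the bilinear terms at all.

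Second, your Adams inequality is misstated and misses the endpoint issue. The correct exponent relation is $\frac{1}{q}=\frac{1}{p}-\frac{\alpha}{2m-\lambda}$ (not $\alpha/\lambda$, which for $\lambda<1$ would even make $1/q$ negative), and at $p=1$---precisely the case $f\in M^{1,\lambda}$---Adams only yields the \emph{weak} Morrey space $M_*^{(2m-\lambda)/(j-\lambda),\lambda}$. The paper applies this weak bound to $\nabla^j u_{12}$ and then uses H\"older on balls to descend to $L^{2m/j}(B_r)$, extracting the decisive factor $r^\lambda$: namely $\|\nabla^j u_{12}\|_{L^{2m/j}(B_r)}\le C\|f\|_{M^{1,\lambda}}r^\lambda$. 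Together with the $\varepsilon$-smallness of $u_{11},u_2$ and the $r$-decay of the polyharmonic $h$, this gives $\Theta(r)\le C(r+\varepsilon)\Theta(\tfrac12)+C\|f\|_{M^{1,\lambda}}r^\lambda$ for $\Theta(r)=\sum_{i=1}^m\|\nabla^i u\|_{L^{2m/i}(B_r)}$, which iterates to $\Theta(r)\le Cr^\lambda$. Note also that your Campanato quantity $\Psi_i(r)=r^{-2m\lambda}\int|\nabla^i u|^{2m/i}$ carries the wrong exponent (the Morrey norm $M^{2m/i,2m\lambda/i}$ requires $r^{-2m\lambda/i}$), so even granting the rest the iteration as written would not land in the claimed space.
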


Theorem \ref{thm:holder continuity} reduces to the main result of Du-Kang-Wang \cite{Du-Kang-Wang-2022} when $m=1$, and to the main result of Xiang-Zheng \cite{XZ2023} when $m=2$. The sharpness of Theorem \ref{thm:holder continuity} can be seen in the following way. Consider the model case $\Delta^m u=f\in L^p(B^{2m})$ with $1<p<\frac{2m}{2m-1}$. Then $f\in L^p\subset M^{1,\lambda}$ with $\lambda=2m(1-1/p)\in (0,1)$. On the other hand, $u\in W^{2m,p}_{\loc}\subset C^{0,\lambda}_{\loc}$, which shows the Morrey regularity of $\nabla u$ is optimal. Notice also that Theorem \ref{thm:holder continuity} fails for the case $\lambda=1$; see \cite{GXZ2022} for a non-Lipschitz continuous solutions.

Our second main result shows that weak solutions of \eqref{eq:even  order linear elliptic system} enjoy higher regularity if $f$ has higher Morrey regularity. 
\begin{theorem}\label{thm:morrey estimate}
		Let $u\in W^{m,2}(B^{2m},\rr^n)$ be a weak solution of \eqref{eq:even  order linear elliptic system} with $f\in M^{p,p\lambda}(B^{2m})$ for some $1<p<\frac{2m}{2m-1}$ and $0\leq\lambda<\frac{2m-(2m-1)p}{p}$. Then 
		$$\nabla^iu\in M^{p_i,p_i\lambda}_{\text{loc}}(B^{2m})\qquad 1\leq i\leq m+1,$$
		where $p_i=\frac{2mp}{2m-(2m-i)p}$ and
		\begin{equation}\label{eq:morrey estimate}
			\sum_{i=1}^{m+1}\|\nabla^iu\|_{M^{p_i,p_i\lambda}(B^{2m}_{1/2})}\leq C\left(\|u\|_{W^{m,2}(B^{2m}_1)}+\|f\|_{M^{p,p\lambda}(B^{2m}_1)}\right)
		\end{equation}
	holds for some constant $C>0$ depending only on $m,n,p,\lambda$ and the coefficient functions $V_l,w_l$.
\end{theorem}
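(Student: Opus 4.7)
The plan is to bootstrap from Theorem \ref{thm:holder continuity} by combining the conservation law \eqref{eq:conservation law 1}, a Green's representation on balls, and an Adams-type Riesz-potential estimate for Morrey spaces.

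\textbf{Base case.} By H\"older's inequality on balls one has $M^{p,p\lambda}(B^{2m})\hookrightarrow M^{1,\lambda_0}(B^{2m})$ with $\lambda_0=\lambda+2m(1-1/p)$, and the upper bound $\lambda<\tfrac{2m-(2m-1)p}{p}$ translates precisely to $\lambda_0<1$. Hence Theorem \ref{thm:holder continuity} supplies the starting regularity $u\in C^{0,\lambda_0}_{\loc}(B^{2m})$ and the Morrey decay $\nabla^iu\in M^{2m/i,2m\lambda_0/i}_{\loc}(B^{2m})$ for $1\leq i\leq m$, with quantitative control by \eqref{eq:decay estimate}.

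\textbf{Decomposition via the conservation law.} Rescale so that the smallness \eqref{eq:smallness assumption} holds on a working ball $B_R(x_0)\subset B^{2m}_{1/2}$ and fix the de Longueville-Gastel gauge $(A,B)$ from \cite{DG2021}. For every $B_r\subset B_R(x_0)$ decompose $u=u_1+u_2$, where $u_1$ is the polyharmonic extension matching the traces of $u$ of order $<m$ on $\partial B_r$, and $u_2:=u-u_1$. Standard interior estimates for polyharmonic functions give the Campanato decay
$$
\int_{B_\rho}|\nabla^iu_1|^{p_i}\,dx\leq C\bigl(\tfrac{\rho}{r}\bigr)^{2m}\int_{B_r}|\nabla^iu_1|^{p_i}\,dx,\qquad 1\leq i\leq m+1,\ 0<\rho<r.
$$
Using \eqref{eq:conservation law 1} together with the Green's function of $\Delta^m$ on $B_r$, the remainder admits a Riesz-potential formula $\nabla^iu_2=I_{2m-i}[Af+\mathcal N]$ modulo lower-order boundary terms, where $\mathcal N$ is a finite sum of multilinear products of $A$, $B$, $V_\ell$, $w_\ell$ and derivatives of $u$ of order at most $2m-1$.

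\textbf{Iteration and main obstacle.} By the Sobolev multiplier properties of the coefficients combined with the base estimates of Step 1, each summand in $Af+\mathcal N$ is placed in $M^{p,p\lambda}$, with a small-coefficient prefactor $\sigma(r)\to 0$ in front of the $u$-dependent pieces. The Adams-type Morrey-space Riesz-potential estimate (in the form used in \cite{GXZ2022}) then delivers $\nabla^iu_2\in M^{p_i,p_i\lambda}$ for $1\leq i\leq m+1$. Coupling this with the polyharmonic decay of $u_1$ produces an excess-decay inequality
$$
\Phi_i(\rho)\leq C\bigl(\tfrac{\rho}{r}\bigr)^{2m}\Phi_i(r)+C\,r^{p_i\lambda}\bigl(\|f\|_{M^{p,p\lambda}(B_R)}^{p_i}+\sigma(r)^{p_i}\Phi_i(r)\bigr),\qquad \Phi_i(r):=\int_{B_r}|\nabla^iu|^{p_i}\,dx,
$$
and a standard hole-filling/iteration lemma forces $\Phi_i(\rho)\leq C\rho^{p_i\lambda}$, i.e.\ $\nabla^iu\in M^{p_i,p_i\lambda}_{\loc}$; summing over $i$ gives \eqref{eq:morrey estimate}. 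The delicate point is pushing the argument one derivative above the natural $W^{m,2}$-scale (to $i=m+1$): the bilinear products between the rough coefficients $V_\ell,w_\ell$ (in negative Sobolev spaces) and top-order derivatives of $u$ must be placed in the correct Morrey class with enough smallness to be absorbed on the left, which uses the full strength of Step 1 together with the smallness of $\theta_{B_R}$. The constraint $\lambda<\tfrac{2m-(2m-1)p}{p}$ is what keeps all Adams-type exponents in the admissible range throughout the bootstrap.
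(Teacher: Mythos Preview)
Your outline diverges from the paper's proof in two essential ways, and contains a gap at the step you yourself flag as ``delicate.''

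First, the paper does \emph{not} proceed via a Riesz-potential/excess-decay iteration on $u$. Instead it (i) invokes the $L^p$ theory of \cite{GXZ2022} as a black box to obtain the preliminary regularity $u\in W^{m+1,p_{m+1}}_{\loc}$, without which $\Gamma(r)=\sum_{i=1}^{m+1}\|\nabla^iu\|_{L^{p_i}(B_r)}$ is not even known to be finite; (ii) splits $A\Delta u=v+h$ (not $u$) on each ball via a Navier problem for $\Delta^{m-1}$; and (iii) estimates $\|\Delta^{(m-2)/2}v\|_{L^{p_m}}$ by a \emph{duality argument} (testing against solutions of $\Delta^{m/2}\Phi=\varphi$), not by Adams' Riesz-potential bound. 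The even/odd parity of $m$ forces separate treatments (via $\Delta^{(m-2)/2}$ versus $\Delta^{(m-3)/2}$), which your outline does not address. The final iteration is Simon's lemma applied to $\Gamma(r)$, yielding $\Gamma(r)\le CMr^{\lambda}$ directly, rather than a Campanato hole-filling on each $\Phi_i$ separately.

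Second, the step ``each summand in $Af+\mathcal N$ is placed in $M^{p,p\lambda}$'' is not justified by the base case. From Theorem~\ref{thm:holder continuity} you only have $\nabla^ku\in M^{2m/k,2m\lambda_0/k}$, so a product $\nabla^jA\cdot\nabla^ku$ lands in $M^{2m/(j+k),2m\lambda_0/(j+k)}$, which is not the $M^{p,p\lambda}$ scale you need to feed into $I_{2m-i}$ and land in $M^{p_i,p_i\lambda}$. The paper avoids this precisely by the duality trick: after integrating by parts against $\Phi\in W^{m,p_m'}$, every bilinear term becomes an $L^1$ pairing controlled by $\varepsilon\sum_i\|\nabla^iu\|_{L^{p_i}(B_r)}$ (this is the content of \cite[Step 2 on p.~316]{GXZ2022}), and the $f$-term contributes $\|f\|_{L^p(B_r)}\le r^{\lambda}\|f\|_{M^{p,p\lambda}}$. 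In your scheme you would have to reproduce all those multilinear estimates in the Morrey scale and with the $L^{p_i}$ exponents before the excess-decay inequality can close; this is exactly the technical core, and it is not supplied.
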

If $\lambda=0$, then Theorem \ref{thm:morrey estimate} reduces to the main $L^p$-regularity theorem of Guo-Xiang-Zheng \cite{GXZ2022}. As was observed in \cite{GXZ2022}, one cannot expect higher (Morrey-)Sobolev regularity for $\nabla^iu$ with $i>m+1$ and thus Theorem \ref{thm:morrey estimate} is optimal in this sense. 

As the general idea for the proofs of Theorem \ref{thm:holder continuity} and Theorem \ref{thm:morrey estimate} is very similar to that used in Guo-Xiang-Zheng \cite{GXZ2022}. We will follow closely the presentation in \cite{GXZ2022} and indicate the necessary changes when necessary. 

\section{Morrey spaces and fractional Riesz operators}
Let $\Omega\subset\rr^n$ be a bounded smooth domain, $1\leq p<\infty$ and $0\leq\lambda<n$. The Morrey space $M^{p,\lambda}(\Omega)$ consists of function $f\in L^p(\Omega)$ such that
\begin{equation*}
	\|f\|_{M^{p,\lambda}(\Omega)}\equiv\left(\sup_{x\in\Omega,r>0}r^{-\lambda}\int_{B_r(x)\cap\Omega}|f|^p\right)^{1/p}<\infty.
\end{equation*}
Denote by $L^p_*$ the weak $L^p$ space and define the weak Morrey space $M_*^{p,\lambda}(\Omega)$ as the space of functions $f\in L^p_*(\Omega)$ such that 
\begin{equation*}
	\|f\|_{M_*^{p,\lambda}(\Omega)}\equiv\left(\sup_{x\in\Omega,r>0}r^{-\lambda}\|f\|^p_{L^p_*(B_r(x)\cap\Omega)}\right)^{1/p}<\infty,
\end{equation*}
where 
\begin{equation*}
	\|f\|^p_{L^p_*(B_r(x)\cap\Omega)}\equiv\sup_{t>0}t^p\left|\{x\in B_r(x)\cap\Omega:|f(x)|>t\}\right|.
\end{equation*}

Let $0<\alpha<n$. Then the Riesz operators $I_\alpha f$ of a locally integrable function $f$ on $\rr^n$ is the function defined by
$$I_\alpha(f)(x):=\frac{1}{c_{\alpha,n}}\int_{\rr^n}|x-y|^{\alpha-n}f(y)\,dy,$$
where the constant $C_{\alpha,n}$ is given by
$$c_{\alpha,n}=\pi^{n/2}2^\alpha\frac{\Gamma(\alpha/2)}{\Gamma((n-\alpha)/2)}.$$

The following well-known estimates on fractional Riesz operators between Morrey spaces were proved by Adams \cite{A1975}.

\begin{proposition}\label{prop:Adams 1975} Let $0< \alpha<n$, $0\le \lambda< n$ and $1\le p<\frac{n-\lambda}{\alpha}$. There exists a constant $C>0$ depending only $n,\alpha,\lambda$ and $p$ such that, for all $f\in M^{p,\lambda}(\R^n)$, there holds
	
	{\upshape(i)} If  $p>1$, then
	\begin{equation}\label{eq:Riesz Adams 1}
		\|I_\alpha(f)\|_{M^{\frac{(n-\lambda)p}{n-\lambda-\alpha p},\lambda}(\R^n)}\leq C\|f\|_{M^{p,\lambda}(\R^n)}.
	\end{equation}
	
	{\upshape(ii)} If $p=1$, then
	\begin{equation}\label{eq:Riesz Adams 2}
		\|I_\alpha(f)\|_{M_{*}^{\frac{n-\lambda}{n-\lambda-\alpha},\lambda}(\R^n)}\leq C\|f\|_{M^{1,\lambda}(\R^n)}.
	\end{equation}	
\end{proposition}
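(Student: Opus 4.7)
The plan is to follow Adams' classical splitting argument, which reduces the Morrey mapping property of $I_\alpha$ to the Morrey mapping property of the Hardy--Littlewood maximal function $M$. First, I would fix $x\in \rr^n$ and $\delta>0$, and decompose
\begin{equation*}
	|I_\alpha(f)(x)| \ls \int_{|y-x|<\delta}|x-y|^{\alpha-n}|f(y)|\,dy + \int_{|y-x|\ge\delta}|x-y|^{\alpha-n}|f(y)|\,dy =: \mathrm{I} + \mathrm{II}.
\end{equation*}
For $\mathrm{I}$ I would use the standard dyadic annular decomposition $\{2^{-j-1}\delta\le|y-x|<2^{-j}\delta\}_{j\ge 0}$ and the elementary bound on averages to obtain $\mathrm{I}\ls \delta^\alpha Mf(x)$. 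For $\mathrm{II}$ the same dyadic splitting together with H\"older's inequality and the definition of the Morrey norm gives
\begin{equation*}
	\mathrm{II} \ls \sum_{j\ge 0} (2^j\delta)^{\alpha-n} (2^j\delta)^{n(1-1/p)+\lambda/p} \|f\|_{M^{p,\lambda}} \ls \delta^{\alpha-(n-\lambda)/p}\|f\|_{M^{p,\lambda}(\rr^n)},
\end{equation*}
where convergence of the geometric series is precisely where the hypothesis $p<(n-\lambda)/\alpha$ is used.

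Next, I would optimize in $\delta$ by balancing the two terms, choosing $\delta$ so that $\delta^{(n-\lambda)/p}=\|f\|_{M^{p,\lambda}}/Mf(x)$. This yields the pointwise comparison
\begin{equation*}
	|I_\alpha(f)(x)| \ls \bigl(Mf(x)\bigr)^{p/q} \|f\|_{M^{p,\lambda}(\rr^n)}^{1-p/q},\qquad q=\frac{(n-\lambda)p}{n-\lambda-\alpha p},
\end{equation*}
which is the key reduction in both cases.

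For part (i) with $p>1$, I would then raise this to the $q$-th power, integrate over a ball $B_r(x_0)$, and invoke the Chiarenza--Frasca type boundedness of the maximal operator on Morrey spaces, namely $\|Mf\|_{M^{p,\lambda}}\ls \|f\|_{M^{p,\lambda}}$ (for $p>1$), to bound $\int_{B_r(x_0)}(Mf)^p\le Cr^\lambda \|f\|_{M^{p,\lambda}}^p$; rearranging powers yields exactly \eqref{eq:Riesz Adams 1}. For part (ii) with $p=1$, the strong bound for $M$ is no longer available, so I would instead use the pointwise bound to pass to distribution functions: for a threshold $t>0$, choose $\delta$ so that $C\delta^{\alpha-(n-\lambda)}\|f\|_{M^{1,\lambda}}=t/2$, which kills the contribution from $\mathrm{II}$, and then estimate
\begin{equation*}
	\bigl|\{x\in B_r(x_0): |I_\alpha f(x)|>t\}\bigr| \le \bigl|\{x\in B_r(x_0): Mf(x) > c\, t\,\delta^{-\alpha}\}\bigr|,
\end{equation*}
which is controlled by the weak Morrey bound for $M$ (the weak $(1,1)$ analogue $\|Mf\|_{M^{1,\lambda}_*}\ls \|f\|_{M^{1,\lambda}}$). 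Substituting the value of $\delta$ and computing the exponent gives precisely the weak Morrey estimate \eqref{eq:Riesz Adams 2}.

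\textbf{Main obstacle.} The truly delicate point is the endpoint case $p=1$: since the strong-type Morrey bound for $M$ fails at $p=1$, one must work with level sets and verify that the weak Morrey norm of $Mf$ is controlled by the Morrey norm of $f$, which itself requires a Calder\'on--Zygmund style decomposition adapted to the Morrey scale. Once this input is accepted, the rest of the argument is an optimization in $\delta$ and bookkeeping of exponents; by contrast the case $p>1$ is essentially formal given the Morrey boundedness of $M$.
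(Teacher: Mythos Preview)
The paper does not supply a proof of this proposition; it merely attributes the result to Adams \cite{A1975}. Your outline is correct and reproduces precisely Adams' original argument---the near/far splitting of $I_\alpha f$, the pointwise inequality $|I_\alpha f|\ls (Mf)^{p/q}\|f\|_{M^{p,\lambda}}^{1-p/q}$ obtained by optimizing in $\delta$, and the subsequent reduction to the (strong, respectively weak) Morrey boundedness of the Hardy--Littlewood maximal operator---so there is nothing further to compare.
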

%
In particular, when $\lambda=0$, it reduces to the boundedness theory of Riesz operator between $L^p$ spaces.

\section{H\"older continuity via decay estimates}
This section is devoted to the proof of Theorem~\ref{thm:holder continuity}. Following \cite[Proof of Lemma 3.2]{GXZ2022}, we divide the proof into four steps. 
\begin{proof}[Proof of Theorem \ref{thm:holder continuity}]
First of all, since the result is local and scaling invariant, we may assume that for a sufficiently small $\varepsilon$, the conservation law \eqref{eq:conservation law 1} holds for some $A,B$ in $B^{2m}_{1/2}$; for details, see \cite{GXZ2022}.

\textbf{Step 1.} Rewrite the equation. 

According to \cite[Proposition 3.3]{GXZ2022}, $Adu$ satisfies the equation
\begin{equation}
	\delta\Delta^{m-1}(Adu)=\sum_{i=1}^{m-1}\delta^i\left(\sum_{j=m-i}^{m}\nabla^jA\nabla^{2m-i-j}u\right)+\delta K+Af,
\end{equation}
where $\delta$ denotes the divergence operator, $\delta^i$ means taking divergence for $i$ times and $K$ is the last five terms of the conservation law~\eqref{eq:conservation law 1}.

\textbf{Step 2.} Decompose $Adu$.

Extend all the functions from $B_{1/2}$ into the whole space $\rr^{2m}$ in a bounded way and for simplicity use the same notations to denote the extended functions. For $f$, simply extends it as zero outside $B_{1/2}$. As in \cite[Page 299]{GXZ2022}, let $c\log|\cdot|$ be the fundamental solution of $\Delta^m$ in $\rr^{2m}$ and define
$$u_{11}=c\log*\left(\sum_{i=1}^{m-1}\delta^i\left(\sum_{j=m-i}^{m}\nabla^jA\nabla^{2m-i-j}u\right)+\delta K\right),\quad u_{12}=c\log*(Af)$$
and
$$u_2=c\log*\Delta^{m-1}(dA\wedge du).$$
Then we have $\Delta^mu_{11}=\sum_{i=1}^{m-1}\delta^i\left(\sum_{j=m-i}^{m}\nabla^jA\nabla^{2m-i-j}u\right)+\delta K$, $\Delta^mu_{12}=Af$ and $\Delta^mu_2=\Delta^{m-1}(dA\wedge du)$. Thus, we obtain the decomposition
$$Adu=du_{11}+du_{12}+d^*u_2+h$$
for some $m$-polyharmonic $1$-form in $B^{2m}_{1/2}$.

\textbf{Step 3.} Estimates of $u_{11},u_{12},u_2$ and $h$.

For the  term $u_{11}$ and $u_2$, we may repeat the proof in \cite[Step 3 in the proof of Lemma 3.2]{GXZ2022} (even simpler, using the boundedness of Riesz operator on $L^p$ spaces, instead of Lorentz spaces) to obtain
\begin{equation*}
	\sum_{j=1}^{m}\|\nabla^ju_{11}\|_{L^{2m/j}(\rr^{2m})}\lesssim\varepsilon\sum_{i=1}^{m}\|\nabla^iu\|_{L^{2m/i}(B^{2m}_{1/2})}
\end{equation*}
and
\begin{equation*}
	\sum_{j=1}^{m}\|\nabla^ju_2\|_{L^{2m/j}(\rr^{2m})}\lesssim\varepsilon\sum_{i=1}^{m}\|\nabla^iu\|_{L^{2m/i}(B^{2m}_{1/2})}.
\end{equation*}
For $u_{12}$, we use the estimate
$$|\nabla^ju_{12}|\lesssim I_{2m-j}(|Af|)\qquad\text{for all\,\,}1\leq j\leq 2m$$
and the boundedness of the operator
$$I_{2m-j}\colon M^{1,\lambda}(\rr^{2m})\to M_*^{\frac{2m-\lambda}{j-\lambda},\lambda}(\rr^{2m})$$
and find that $\nabla^ju_{12}\in M_*^{\frac{2m-\lambda}{j-\lambda},\lambda}(\rr^{2m})$ with estimate
$$\|\nabla^ju_{12}\|_{M_*^{\frac{2m-\lambda}{j-\lambda},\lambda}(\rr^{2m})}\lesssim\|f\|_{M^{1,\lambda}(\rr^{2m})}\lesssim\|f\|_{M^{1,\lambda}(B^{2m}_{1/2})}.$$
For any $r>0$, it follows from H\"older's inequality that
$$\|\nabla^ju_{12}\|_{L^{2m/j}(B^{2m}_r)}
\leq\|\nabla^ju_{12}\|_{L_*^{\frac{2m-\lambda}{j-\lambda}}(B^{2m}_r)}r^{2m\left(\frac{j}{2m}-\frac{j-\lambda}{2m-\lambda}\right)}
\leq C\|f\|_{M^{1,\lambda}(B^{2m}_{1/2})}r^\lambda.$$
Summing over $j$, we infer that 
$$\sum_{j=1}^{m}\|\nabla^ju_{12}\|_{L^{2m/j}(B^{2m}_r)}\lesssim  \|f\|_{M^{1,\lambda}(B^{2m}_{1/2})}r^\lambda.$$
For the polyharmonic function $h$, it follows from \cite[Lemma 6.2]{GS2009} that, for any $0<r<\frac{1}{4}$,
$$\sum_{j=1}^{m}\|\nabla^jh\|_{L^{2m/j}(B^{2m}_r)}\lesssim  r\sum_{i=1}^{m}\|\nabla^ih\|_{L^{2m/i}(B^{2m}_{1/2})}.$$

\textbf{Step 4.} Conclusion.

For any $0<r<\frac{1}{4}$ and $1\leq j\leq m$, the triangle inequality implies that
\begin{equation*}
	\begin{split}
		\|\nabla^ju\|_{L^{2m/j}(B^{2m}_r)}
		&\leq\|\nabla^{j-1}(A^{-1}h)\|_{L^{2m/j}(B^{2m}_r)}+\|\nabla^{j-1}(A^{-1}du_{11})\|_{L^{2m/j}(B^{2m}_r)}\\
		&\quad+\|\nabla^{j-1}(A^{-1}du_{12})\|_{L^{2m/j}(B^{2m}_r)}+\|\nabla^{j-1}(A^{-1}*du_2)\|_{L^{2m/j}(B^{2m}_r)}\\
		&\lesssim r\sum_{i=1}^{m}\|\nabla^ih\|_{L^{2m/i}(B^{2m}_{1/2})}+\varepsilon\sum_{i=1}^{m}\|\nabla^iu\|_{L^{2m/i}(B^{2m}_{1/2})}+\|f\|_{M^{1,\lambda}(B^{2m}_{1/2})}r^\lambda\\
		&\leq C(r+\varepsilon)\sum_{i=1}^{m}\|\nabla^iu\|_{L^{2m/i}(B^{2m}_{1/2})}+C\|f\|_{M^{1,\lambda}(B^{2m}_{1/2})}r^\lambda.
	\end{split}
\end{equation*}
Summing over $j$, we obtain
$$\sum_{i=1}^{m}\|\nabla^iu\|_{L^{2m/i}(B^{2m}_r)}\leq C(r+\varepsilon)\sum_{i=1}^{m}\|\nabla^iu\|_{L^{2m/i}(B^{2m}_{1/2})}+C\|f\|_{M^{1,\lambda}(B^{2m}_{1/2})}r^\lambda,$$
where $C$ is a constant depending only $m,n$ and $\lambda$. 
Write 
$$\Theta(r)=\sum_{i=1}^{m}\|\nabla^iu\|_{L^{2m/i}(B^{2m}_r)}.$$
Then for any $0<r<\frac{1}{4}$
$$\Theta(r)\leq C(r+\varepsilon)\Theta(\frac{1}{2})+C\|f\|_{M^{1,\lambda}(B^{2m}_{1/2})}r^\lambda$$
for some $C=C(m,n,\lambda)>0$. Now choose $r=\tau$ small such that $2C\tau\leq\tau^{(\lambda+1)/2}$, and then choose $\varepsilon\leq \tau$, we obtain the decay estimate
$$\Theta(\tau)\leq\tau^{\frac{\lambda+1}{2}}\Theta(\frac{1}{2})+C\|f\|_{M^{1,\lambda}(B^{2m}_{1/2})}\tau^\lambda.$$
Now using a standard scaling~\cite[Section 2.3]{GXZ2022} and iteration argument (see \cite[Proof of Theorem 3.1]{GXZ2021}), we conclude that for any $k\geq 1$,
$$\Theta(\tau^k)\leq\tau^{\frac{\lambda+1}{2}}\Theta(\tau^{k-1})+C\|f\|_{M^{1,\lambda}(B^{2m}_{1/2})}\tau^{k\lambda},$$
which implies that
$$\Theta(r)\leq Cr^\lambda\left(\Theta(1)+\|f\|_{M^{1,\lambda}(B^{2m}_{1})}\right)$$
for all $0<r<\frac{1}{4}$. This gives $\nabla^iu\in M^{\frac{2m}{i},\frac{2m\lambda}{i}}_{\text{loc}}(B^{2m})$ for all $1\leq i\leq m$ together with the desired estimate~\eqref{eq:decay estimate}.

Finally, Morrey's Dirichlet growth theorem (see  e.g.~\cite{G1983}) implies that $u\in C^{0,\lambda}_{\text{loc}}(B^{2m})$ and the H\"older continuity estimate~\eqref{eq:holder estimate 1}. This completes the proof of Theorem~\ref{thm:holder continuity}.   
\end{proof}

\section{Optimal local estimates}
In this section, we shall prove Theorem~\ref{thm:morrey estimate}. The general strategy is very similar to \cite[Proof of Theorem 1.2]{GXZ2022}. Before the official proofs, we shall point out three easy consequences of Theorem~\ref{thm:holder continuity}. 

1) By H\"older's inequality, we have
	\begin{equation*}
		M^{p,p\lambda}(B^{2m}_1)\subset M^{1,\lambda_0}(B^{2m}_1),
	\end{equation*}
where
\begin{equation}\label{eq:lambda_0}
	\lambda_0=\lambda+2m(1-\frac{1}{p}).
\end{equation}
Thus Theorem~\ref{thm:holder continuity} implies that
$$\sum_{i=1}^{m}\|\nabla^iu\|_{M^{2m/i,2m\lambda_0/i}(B^{2m}_{1/2})}\leq C\left(\|u\|_{W^{m,2}(B^{2m}_1)}+\|f\|_{M^{1,\lambda_0}(B^{2m}_1)}\right).$$
and that $u\in C^{0,\lambda_0}(B^{2m}_{1/2})$ with 
\begin{equation}\label{eq:holder estimate}
	\|u\|_{C^{0,\lambda_0}(B^{2m}_{1/2})}\leq C \left(\|u\|_{W^{m,2}(B^{2m}_1)}+\|f\|_{M^{1,\lambda_0}(B^{2m}_1)}\right).
\end{equation}

2) Since $M^{p,p\lambda}(B^{2m}_1)\subset L^p(B^{2m}_1)$ with $1<p<\frac{2m}{2m-1}$, \cite[Theorem 1.2]{GXZ2022} implies that $u\in W^{m+1,p_{m+1}}(B^{2m}_{1/2})$ together with the estimate
\begin{equation*}
	\|u\|_{W^{m+1,p_{m+1}}(B^{2m}_{1/2})}\leq C\left(\|u\|_{W^{m,2}(B^{2m}_1)}+\|f\|_{L^p(B^{2m}_1)}\right),
\end{equation*}
where $p_{m+1}=\frac{2mp}{2m-(m-1)p}.$

3) Thanks to the above $W^{m+1,p_{m+1}}$-regularity, we can then repeat the proof of Theorem \ref{thm:holder continuity} to deduce that
\begin{equation*}
	\|\nabla^{m+1}u\|_{M^{\frac{2m}{m+1},\frac{2m\lambda_0}{m+1}}{(B^{2m}_{1/2}})}\leq C \left(\|u\|_{W^{m,2}(B^{2m}_1)}+\|f\|_{M^{1,\lambda_0}(B^{2m}_1)}\right).
\end{equation*}

Therefore, we summarize the above result to conclude that
\begin{equation*}
	\sum_{i=1}^{m+1}\|\nabla^iu\|_{M^{2m/i,2m\lambda_0/i}(B^{2m}_{1/2})}\leq CM, 
\end{equation*}
where
\begin{equation}\label{eq:M}
	M=\left(\|u\|_{W^{m,2}(B^{2m}_1)}+\|f\|_{M^{p,p\lambda}(B^{2m}_1)}\right).
\end{equation}

With all the previous ingredients at hand, we are now ready to prove Theorem~\ref{thm:morrey estimate}.

\begin{proof}[Proof of Theorem~\ref{thm:morrey estimate}]
Module some technical arguments, the proof presented here is very similar to \cite[Section 5]{GXZ2022}. By the discussion above, we know that $u\in W^{m+1,p_{m+1}}(B^{2m}_{1/2})$. As in \cite[Proof of Theorem 1.2]{GXZ2022}, we consider separately two cases.
\medskip 

\textbf{Case I}: $m$ is an even integer.
\medskip

According to \cite[Corollary 3.4]{GXZ2022}, $A\Delta u$ satisfies the equation
\begin{equation}
	\begin{split}
			\Delta^{m-1}(A\Delta u)&=\sum_{i=1}^{m-1}\delta^i\left(\sum_{j=m-i}^{m}\nabla^jA\nabla^{2m-i-j}u\right)-\Delta^{m-1}(dAdu)+\delta K+Af\\
			&=\sum_{i=1}^{m-1}\delta^i\left(\sum_{j=m-i}^{m}\nabla^jA\nabla^{2m-i-j}u\right)+\delta K+Af,
	\end{split}
\end{equation}
where $\delta$ denotes the divergence operator, $\delta^i$ means taking divergence for $i$ times and $K$ is the last five terms of the conservation law~\eqref{eq:conservation law 1}.  Here, we use $\sum_{i}a_i$ to denote a linear combination of $a_i$'s, i.e., $\sum_ia_i=\sum_ic_ia_i$ for some harmless absolute constant $c_i$.
 
\textbf{Step 1.} Split $A\Delta u$.
 
 Fix $x_0\in B^{2m}_{1/4}$ and $0<r<\frac{1}{4}$. Split $A\Delta u=v+h$ in $B_r(x_0)$ with $v$ and $h$ satisfying
 \begin{equation*}
 	\begin{cases}
 		\Delta^{m-1}h=0\qquad\quad\,\text{in\,\,}B_r(x_0),\\
 		\Delta^ih=\Delta^i(A\Delta u)\quad\text{on\,\,}\partial B_r(x_0),\quad 0\leq i\leq m-2
 	\end{cases}
 \end{equation*} 
and
\begin{equation*}
	\begin{cases}
		\Delta^{m-1}v=\Delta^{m-1}(A\Delta u)\qquad\qquad\,\text{in\,\,}B_r(x_0),\\
		v=\Delta^{\frac{m}{2}}v=\cdots=\Delta^{m-2}v=0\quad\,\text{on\,\,}\partial B_r(x_0).
	\end{cases}
\end{equation*}
Then $v$ satisfies 
\begin{equation*}
		\Delta^{m-1}v=\sum_{i=1}^{m-1}\delta^i\left(\sum_{j=m-i}^{m}\nabla^jA\nabla^{2m-i-j}u\right)+\delta K+Af.
\end{equation*}

\textbf{Step 2.} A duality argument. 

In this step, we will divide it into two parts.

\textbf{Part 1.} Note that $p_m=\frac{2p}{2-p}$. By the duality of $L^p$-norm, we have  $$\|\Delta^{\frac{m-2}{2}}v\|_{L^{p_m}(B_r(x_0))}=\sup_{\varphi\in\mathcal{A}_1}\int_{B_r(x_0)}(\Delta^{\frac{m-2}{2}}v)\varphi,$$
where
$$\mathcal{A}_1=\{\varphi\in L^{p_m'}(B_r(x_0),\rr^m)\colon\|\varphi\|_{L^{p_m'}(B_r(x_0))}\leq 1\}$$
and $p_m'=\frac{p_m}{p_m-1}$ is the H\"older conjugate exponent of $p_m$. For any $\varphi\in\mathcal{A}_1$, let $\Phi\in W^{\frac{m}{2},p_m'}(B_r(x_0))$ satisfy
\begin{equation*}
	 	\begin{cases}
		\Delta^{\frac{m}{2}}\Phi=\varphi\qquad\qquad\qquad\qquad\quad\,\,\,\text{in\,\,}B_r(x_0),\\
        \Phi=\Delta\Phi=\cdots=\Delta^{\frac{m}{2}-1}\Phi=0\quad\,\text{on\,\,}\partial B_r(x_0).
	\end{cases}
\end{equation*}
By the standard elliptic regularity theory, there exists a constant $C_p>0$ such that
\begin{equation}\label{eq:Phi estimate}
	\|\Phi\|_{W^{m,p_m'}(B_r(x_0))}\leq C_p\|\varphi\|_{L^{p_m'}(B_r(x_0))}\leq C_p.
\end{equation}
Note that integration by parts gives
$$\int_{B_r(x_0)}(\Delta^{\frac{m-2}{2}}v)\varphi=\int_{B_r(x_0)}(\Delta^{\frac{m-2}{2}}v)(\Delta^{\frac{m}{2}}\Phi)=\int_{B_r(x_0)}(\Delta^{m-1}v)\Phi.$$
For the details, see \cite[Page 315 - 316]{GXZ2022}. Thus, we have
$$\sup_{\varphi\in\mathcal{A}_1}\int_{B_r(x_0)}(\Delta^{\frac{m-2}{2}}v)\varphi
\leq C\sup_{\varphi\in\mathcal{A}_2}\int_{B_r(x_0)}(\Delta^{\frac{m-2}{2}}v)(\Delta^{\frac{m}{2}}\Phi)
=C\sup_{\varphi\in\mathcal{A}_2}\int_{B_r(x_0)}(\Delta^{m-1}v)\Phi,$$
where
$$\mathcal{A}_2=\left\{\Phi\in W^{m,p_m'}(B_r(x_0))\colon\|\Phi\|_{W^{m,p_m'}}\leq 1,\Phi=\Delta\Phi=\cdots=\Delta^{\frac{m}{2}-1}\Phi=0\text{\,\,on\,\,}\partial B_r(x_0)\right\}.$$
Recall that 
$$\int_{B_r(x_0)}(\Delta^{m-1}v)\Phi=\int_{B_r(x_0)}\left\{\sum_{i=1}^{m-1}\delta^i\left(\sum_{j=m-i}^{m}\nabla^jA\nabla^{2m-i-j}u\right)+\delta K+Af\right\}\Phi.$$
Applying H\"older's inequality and the Sobolev embedding $W^{m,p_m'}_0(B_r(x_0))\subset L^{p'}(B_r(x_0))$, we  obtain
\begin{equation*}
	\begin{split}
		\int_{B_r(x_0)}(Af)\Phi
		&\leq\|Af\|_{L^p(B_r(x_0))}\|\Phi\|_{L^{p'}(B_r(x_0))}
		\leq C\|f\|_{L^p(B_r(x_0))}\|\Phi\|_{W_0^{m,p_m'}(B_r(x_0))}\\
		&\leq C_p\|f\|_{L^p(B_r(x_0))}
		\leq C_p\|f\|_{M^{p,p\lambda}(B^{2m})}r^\lambda.
	\end{split}
\end{equation*}
For the remaining terms, we can follow \cite[Step 2 on Page 316]{GXZ2022} to obtain 
$$\int_{B_r(x_0)}\left\{\sum_{i=1}^{m-1}\delta^i\left(\sum_{j=m-i}^{m}\nabla^jA\nabla^{2m-i-j}u\right)+\delta K\right\}\Phi\lesssim\varepsilon\sum_{i=1}^{m}\|\nabla^iu\|_{L^{p_i}(B_r(x_0))}.$$
Combining the above two estimates gives
$$\|\Delta^{\frac{m-2}{2}}v\|_{L^{p_m}(B_r(x_0))}
\leq C\varepsilon\sum_{i=1}^{m}\|\nabla^iu\|_{L^{p_i}(B_r(x_0))}+C\|f\|_{M^{p,p\lambda}(B^{2m})}r^\lambda.$$

\textbf{Part 2.} Estimate of $\|\nabla^{m-2}h\|_{L^{p_m}(B_{r/2}(x_0))}$ 

Since $A\Delta u\in W^{m-1,\frac{2m}{m+1}}(B_r(x_0))$, we can apply \cite[Lemma 5.1]{GXZ2022} to obtain that $h\in W^{m-1,\frac{2m}{m+1}}(B_r(x_0))$ with
\begin{equation*}
	\|h\|_{W^{m-1,\frac{2m}{m+1}}(B_r(x_0))}\leq C\|A\Delta u\|_{W^{m-1,\frac{2m}{m+1}}(B_r(x_0))}\leq C\Gamma(r)r^{\lambda_0},
\end{equation*}
where $\lambda_0$ is defined as in \eqref{eq:lambda_0} and
$$\Gamma(r)=\sum_{i=1}^{m+1}\|\nabla^iu\|_{L^{p_i}(B_r(x_0))}.$$
In particular, this implies that
$$\|h\|_{W^{m-1,\frac{2m}{m+1}}(B_r(x_0))}\leq CMr^{\lambda_0}.$$
where $M$ is defined as in \eqref{eq:M}. It follows from the poly-harmonicity of $h$ and the Sobolev embedding $W^{1,\frac{2m}{m+1}}(B_r(x_0))\subset L^2(B_r(x_0))$ that
\begin{equation*}
	\begin{split}
		\|\nabla^{m-2}h\|_{L^{p_m}(B_{r/2}(x_0))}
		&\leq Cr^{\frac{m(2-p)}{p}}\|\nabla^{m-2}h\|_{L^\infty(B_{r/2}(x_0))}
		\leq Cr^{\frac{m(2-p)}{p}}\left(\vint_{B_r(x_0)}|\nabla^{m-2}h|^2\right)^\frac{1}{2}\\
		&\leq Cr^{\frac{m(2-p)}{p}}r^{-m}\|\nabla^{m-2}h\|_{W^{1,\frac{2m}{m+1}}(B_r(x_0))}
		\leq CMr^{\lambda}.
	\end{split}
\end{equation*}

\textbf{Step 3.} Conclusion.

For the upper bound on $\|\nabla u\|_{L^{p_1}}$, using a standard interpolation argument (see e.g.~\cite[Section 5.1]{Adams-book}) and the H\"older estimate \eqref{eq:holder estimate}, we obtain
\begin{equation*}
	\begin{split}
		\|\nabla u\|_{L^{p_1}(B_{r/4}(x_0))}
		&\leq C\|\nabla^mu\|_{L^{p_m}(B_{r/4}(x_0))}+Cr^{-m}\|u-u_{B_{r/4}(x_0)}\|_{L^{p_m}(B_{r/4}(x_0))}\\
		&\leq C\|\Delta^{\frac{m}{2}}u\|_{L^{p_m}(B_{r/2}(x_0))}+Cr^{-2m(1-1/p)}\|u-u_{B_{r/2}(x_0)}\|_{L^{\infty}(B_{r/2}(x_0))}\\
		&\leq C\left(\|\Delta^{\frac{m-2}{2}}v\|_{L^{p_m}(B_{r/2}(x_0))}+\|\Delta^{\frac{m-2}{2}}h\|_{L^{p_m}(B_{r/2}(x_0))}\right)+CMr^{\lambda}\\
		&\leq C\varepsilon\sum_{i=1}^{m}\|\nabla^iu\|_{L^{p_i}(B_r(x_0))}+CMr^{\lambda}.
	\end{split}
\end{equation*}
The estimates for the terms $\nabla^2u,\nabla^3u,...,\nabla^mu$ are similar and thus it remains to estimate $\|\nabla^{m+1}u\|_{L^{p_{m+1}}}$. By Step 3 in the proof of Theorem 1.2 in \cite{GXZ2022} (more precisely, the last second estimate before equation (5.11) there), we have
\begin{equation*}
	\begin{split}
		\|\nabla^{m+1}u\|_{L^{p_{m+1}}(B_{r/4}(x_0))}
		&\lesssim \sum_{i=1}^{m}\|\nabla^iu\|_{L^{p_i}(B_{r/2}(x_0))}+r^{-m}\|u-u_{B_{r/2}(x_0)}\|_{L^{p_m}(B_{r/2}(x_0))}\\
		&\leq C\varepsilon\sum_{i=1}^{m}\|\nabla^iu\|_{L^{p_i}(B_r(x_0))}+CMr^{\lambda}.
	\end{split}
\end{equation*}
Combining the above two estimates, we conclude that
$$\Gamma(r/4)\leq C\varepsilon\Gamma(r)+CMr^{\lambda}.$$
Applying the Simon's iteration lemma (see \cite[Lemma A7]{ST2013}), we infer that there are $\varepsilon_0$ and $r_0(\lambda,p)>0$ sufficiently small such that for all $r\leq r_0$, we have
$$\Gamma(r)\leq CMr^{\lambda},$$
which gives \eqref{eq:morrey estimate}.
\medskip 

\textbf{Case II}: $m$ is an odd integer.
\medskip 


The proof for this case is completely similar to \cite[Section 5.2.2]{GXZ2022} and thus we only outline the main differences. In the case, $u\in W_{\loc}^{m-1,\eta}$ for any $\eta<p_{m-1}=\frac{2mp}{2m-(m+1)p}$ and $m-1$ is an even integer. Furthermore, we have
\[
\|\nabla^{m-1}u\|_{L^q\big(B_{\frac{r}{2}}(x_0)\big)}\approx\|\Delta^{\frac{m-3}{2}}(A\Delta u)\|_{L^q\big(B_{\frac{r}{2}}(x_0)\big)}.
\]
In this case, we may repeat exactly what we have done in \textbf{Case I}. The only difference is that instead of first showing that $u\in W^{m,p_m}_{\loc}$, we first show that $u\in W^{m-1,p_{m-1}}_{\loc}$ and
$$	\|u\|_{W^{m-1,p_{m-1}}(B_{\frac{r}{2}}(x_0))}\le Cr^{\lambda}\left(\|f\|_{M^{p,p\lambda}(B_{r}(x_0))}+\|u\|_{W^{m-1,2}(B_{r}(x_0))}\right).$$
With a similar interpolation argument as in the previous case, we may conclude that
$$\|u\|_{W^{m,p_m}(B_{\frac{r}{2}}(x_0))}\le Cr^{\lambda}\left(\|f\|_{M^{p,p\lambda}(B_{r}(x_0))}+\|u\|_{W^{m,2}(B_{r}(x_0))}\right).$$
Then once again an interpolation argument leads to
\begin{equation*}
	\|\nabla^{m+1}u\|_{L^{p_{m+1}}\big(B_{\frac{r}{2}}(x_0)\big)}\lesssim \varepsilon\sum_{i=1}^{m}\|\nabla^iu\|_{L^{p_i}(B_r(x_0))}+ r^{\lambda}\left(\|f\|_{M^{p,p\lambda}(B_{r}(x_0))}+\|u\|_{W^{m,2}(B_{r}(x_0))}\right).
\end{equation*}
Finally,  as in the even case dealt above, combining the previous esimate together with Simon's iteration lemma,  gives the desired estimate \eqref{eq:morrey estimate}. The proof of Theorem~\ref{thm:morrey estimate} is complete.
\end{proof}

%

\end{document}